\DeclareFontFamily{OT2}{cmr}{\hyphenchar\font45 }
\DeclareFontShape{OT2}{cmr}{m}{n}{%
   <5><6><7><8><9>gen*wncyr%
   <10><10.95><12><14.4><17.28><20.74><24.88>wncyr10}{}
\DeclareMathAlphabet{\mathcyr}{OT2}{cmr}{m}{n}
\DeclareMathAlphabet{\mathcyb}{OT2}{cmr}{b}{n}
\SetMathAlphabet{\mathcyr}{bold}{OT2}{cmr}{b}{n}
\newtheorem{thm}{Theorem}[section]
\newtheorem{prop}[thm]{Proposition}
\newtheorem{cor}[thm]{Corollary}
\theoremstyle{definition}
\theoremstyle{remark}
\newtheorem{rem}[thm]{Remark}
\newcommand{\sh}{\mathbin{\mathcyr{sh}}}
\begin{document}

\title[Derivation relation for FMZVs in $\widehat{\mathcal{A}}$]{Derivation relation for finite multiple zeta values in $\widehat{\mathcal{A}}$}

\author{Hideki Murahara}
\address[Hideki Murahara]{Nakamura Gakuen University Graduate School, 5-7-1, Befu, Jonan-ku,
Fukuoka, 814-0198, Japan}
\email{hmurahara@nakamura-u.ac.jp}

\author{Tomokazu Onozuka}
\address[Tomokazu Onozuka]{Multiple Zeta Research Center, Kyushu University 744, Motooka, Nishi-ku,
Fukuoka, 819-0395, Japan}
\email{t-onozuka@math.kyushu-u.ac.jp}

\subjclass[2010]{Primary 11M32}
\keywords{Multiple zeta values, Finite multiple zeta values, Derivation relation}

\begin{abstract}
 Ihara, Kaneko, and Zagier proved the derivation relation for multiple zeta values. 
 The first named author obtained its counterpart for finite multiple zeta values in $\mathcal{A}$. 
 In this paper, we present its generalization in $\widehat{\mathcal{A}}$.   
\end{abstract}

\maketitle

\section{Introduction}
 For $k_1,\dots,k_r\in \mathbb{Z}_{\ge1}$ with $k_r \ge 2$, the multiple zeta values (MZVs) is defined by 
\begin{align*}
 \zeta(k_1,\dots, k_r)=\sum_{1\le n_1<\cdots <n_r} \frac {1}{n_1^{k_1}\cdots n_r^{k_r}}.
\end{align*} 
For $n\in\mathbb{Z}_{\ge1}$, we define the $\mathbb{Q}$-algebra $\mathcal{A}_{n}$ by
\[
\mathcal{A}_{n}:=\biggl(\prod_{p}\mathbb{Z}/p^{n}\mathbb{Z}\biggr)\,\big/\,\biggl(\bigoplus_{p}\mathbb{Z}/p^{n}\mathbb{Z\biggr)}=\{(a_{p})_{p}\mid a_{p}\in\mathbb{Z}/p^{n}\mathbb{Z}\}/\sim,
\]
where $(a_{p})_{p}\sim(b_{p})_{p}$ are identified if and only if
$a_{p}=b_{p}$ for all but finitely many primes $p$. For $k_{1},\dots,k_{r}\in\mathbb{Z}_{\ge1}$
and $n\in\mathbb{Z}_{\ge1}$, the finite multiple zeta values (FMZVs) in $\mathcal{A}_n$ is defined by 
\begin{align*}
\zeta_{\mathcal{A}_{n}}(k_{1},\dots,k_{r}) & :=\biggl(\sum_{1\le n_{1}<\cdots<n_{r}\le p-1}\frac{1}{n_{1}^{k_{1}}\cdots n_{r}^{k_{r}}}\bmod p^{n}\biggr)_{p}\in\mathcal{A}_{n}.
\end{align*}

Recently, Rosen \cite{Ros15} introduced the $\mathbb{Q}$-algebra $\widehat{\mathcal{A}}$. 
By natural projections $\mathcal{A}_{n}\to\mathcal{A}_{n-1}$, we define $\widehat{\mathcal{A}}:=\lim_{\gets n}\mathcal{A}_{n}$, where we put the discrete topology on each $\mathcal{A}_{n}$. 
We also define the natural projections $\pi\colon \prod_p\mathbb{Z}_p\to \widehat{\mathcal{A}}$ and $\pi_n\colon \widehat{\mathcal{A}}\to \mathcal{A}_n$ for each $n$, where $\mathbb{Z}_p$ is the ring of $p$-adic integers. 
Then FMZVs in  $\widehat{\mathcal{A}}$ is given by
\begin{align*}
 \zeta_{\widehat{\mathcal{A}}}(k_{1},\dots,k_{r})
 &:=\pi\biggl(\biggl(\sum_{1\le n_{1}<\cdots<n_{r}\le p-1}\frac{1}{n_{1}^{k_{1}}\cdots n_{r}^{k_{r}}}\biggr)_{p}\biggr)\in\widehat{\mathcal{A}}.
\end{align*}
We can easily check that 
$\pi_n(\zeta_{\widehat{\mathcal{A}}}(\boldsymbol{k}))=\zeta_{\mathcal{A}_n}(\boldsymbol{k})$
for $\boldsymbol{k}\in\mathbb{Z}_{\ge1}^{r}$.
Furthermore, we define $\boldsymbol{p}:=\pi((p)_p)\in\widehat{\mathcal{A}}$ and we also use the notation  $\boldsymbol{p}:=\pi_n\circ\pi((p)_p)\in\mathcal{A}_n$
(for details, see Rosen \cite{Ros15} and Seki \cite{Sek16}).

We recall Hoffman's algebraic setup with a slightly different convention (see Hoffman \cite{Hof97}).
Let $\mathfrak{H} :=\mathbb{Q} \left\langle x,y \right\rangle$ be the noncommutative polynomial ring in two indeterminates $x$, $y$, and $\mathfrak{H}^1$ (resp. $\mathfrak{H}^0$) its subring $\mathbb{Q} +y \mathfrak{H}$ (resp. $\mathbb{Q} +y \mathfrak{H} x$). 
Set $z_{k} := yx^{k-1}$ $(k\in\mathbb{Z}_{\ge1})$. 
We define the $\mathbb{Q}$-linear map $\mathit{Z} \colon \mathfrak{H}^0 \to \mathbb{R}$ by $\mathit{Z}(1):=1$, $\mathit{Z}( z_{k_1} \cdots z_{k_r}):= \zeta (k_1,\ldots, k_r)$. 

A derivation $\partial$ on $\mathfrak{H}$ is a $\mathbb{Q}$-linear map $\partial\colon\mathfrak{H}\to\mathfrak{H}$ satisfying Leibniz's rule $\partial(ww^{\prime})=\partial(w)w^{\prime}+w\partial(w^{\prime})$. 
Such a derivation is uniquely determined by its images of generators $x$ and $y$. 
Set $z:=x+y$. 
For each $l\in\mathbb{Z}_{\ge1}$, the derivation $\partial_l$ on $\mathfrak{H}$ is defined by 
$\partial_l(x):=yz^{l-1}x$ and $\partial_l(y):=-yz^{l-1}x$. 
We note that $\partial_l(1)=0$ and $\partial_l(z)=0$. 
In addition, $R_x$ is the $\mathbb{Q}$-linear map given by $R_x(w):=wx$ for any $w\in\mathfrak{H}$.
\begin{thm}[Derivation relation for MZVs; Ihara-Kaneko-Zagier {\cite[Theorem 3]{IKZ06}}] \label{der_MZVs}
For $l\in\mathbb{Z}_{\ge1}$ and $w \in \mathfrak{H}^0 $, we have 
 \begin{align*} 
 \mathit{Z} (\partial_{l}(w)) = 0.  
 \end{align*}
\end{thm}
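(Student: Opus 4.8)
My plan is to use the iterated-integral representation of MZVs. For a word $w=w_{1}\cdots w_{k}\in\mathfrak{H}^{0}$ one has $\mathit{Z}(w)=\int_{0<t_{1}<\cdots<t_{k}<1}\omega_{w_{1}}(t_{1})\cdots\omega_{w_{k}}(t_{k})$ with $\omega_{x}(t)=dt/t$ and $\omega_{y}(t)=dt/(1-t)$; note $\omega_{x}+\omega_{y}=\omega_{z}=dt/(t(1-t))=d\log\frac{t}{1-t}$. Since $\partial_{l}$ is the derivation with $\partial_{l}(x)=-\partial_{l}(y)=yz^{l-1}x$, Leibniz's rule gives
\[
\partial_{l}(w)=\sum_{i=1}^{k}\varepsilon_{i}\,w_{<i}\,(yz^{l-1}x)\,w_{>i},
\]
where $\varepsilon_{i}=+1$ if $w_{i}=x$ and $\varepsilon_{i}=-1$ if $w_{i}=y$, and $w_{<i}=w_{1}\cdots w_{i-1}$, $w_{>i}=w_{i+1}\cdots w_{k}$. (In particular each term lies again in $\mathfrak{H}^{0}$, so $\mathit{Z}$ may be applied termwise.)

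Next I would assemble the relations over all $l$ by working with $\partial_{t}:=\sum_{l\ge1}\partial_{l}t^{l-1}$, so that $\mathit{Z}(\partial_{l}(w))$ is the coefficient of $t^{l-1}$ in $\mathit{Z}(\partial_{t}(w))$ and it suffices to prove $\mathit{Z}(\partial_{t}(w))=0$. Substituting the integral representation and using that an iterated integral of a string of $\omega_{z}$'s between two points exponentiates (because $\omega_{z}$ is logarithmic), one gets
\[
\mathit{Z}(\partial_{t}(w))=\sum_{i=1}^{k}\varepsilon_{i}\int_{0<t_{1}<\cdots<\widehat{t_{i}}<\cdots<t_{k}<1}\Big(\prod_{j\ne i}\omega_{w_{j}}(t_{j})\Big)\,\mathcal{F}(t_{i-1},t_{i+1};t),
\]
with the conventions $t_{0}=0$, $t_{k+1}=1$ and the explicit incomplete-beta-type kernel $\mathcal{F}(a,b;t):=\int_{a<s<s'<b}\frac{ds}{1-s}\big(\frac{s'(1-s)}{(1-s')s}\big)^{t}\frac{ds'}{s'}$. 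Everything thus comes down to showing that this signed sum over the insertion gaps vanishes.

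I would establish that vanishing by a Stokes / integration-by-parts telescoping: $\mathcal{F}(a,b;t)$ has explicit derivatives with respect to each of its two endpoints (again of incomplete-beta type), and the plan is to rewrite the integrand of the $i$-th summand as an exact differential plus terms that cancel, with opposite sign, against those produced by the $(i\pm1)$-st summands; the leftover boundary contributions sit at $t_{0}=0$ and $t_{k+1}=1$ and vanish precisely because $w_{1}=y$ and $w_{k}=x$ (the adjacent forms being $dt_{1}/(1-t_{1})$ and $dt_{k}/t_{k}$, whose primitives vanish at $0$ and at $1$ respectively). Making this telescoping precise is the step I expect to be the main obstacle. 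As a fallback that avoids the analysis, one can instead deduce Theorem~\ref{der_MZVs} from Ohno's relation — written on $\mathfrak{H}^{0}$ as $\mathit{Z}(\sigma(w))=\mathit{Z}\bigl(\tau(\sigma(\tau(w)))\bigr)$, where $\tau$ is the duality anti-automorphism ($\tau(x)=y$, $\tau(y)=x$, $\mathit{Z}\circ\tau=\mathit{Z}$) and $\sigma$ is the Ohno operator, the automorphism of $\mathfrak{H}[[t]]$ with $\sigma(x)=x$, $\sigma(y)=y(1-tx)^{-1}$ — by exhibiting, through an intricate computation in the free algebra $\mathfrak{H}$ (along the lines of Ihara--Kaneko--Zagier), each relation $\mathit{Z}(\partial_{l}(w))=0$ as a consequence of Ohno's relations; here the bookkeeping identity relating the $\partial_{l}$ to $\sigma$ and its $\tau$-conjugate is the analogue of the obstacle above.
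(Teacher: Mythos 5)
Your proposal is not a proof: it stops exactly where the theorem's content begins. The Leibniz expansion of $\partial_l(w)$, the observation that each term stays in $\mathfrak{H}^0$, the passage to the generating series $\partial_t=\sum_{l\ge1}\partial_l t^{l-1}$, and the repackaging of the inserted block $yz^{l-1}x$ into the kernel $\mathcal{F}(t_{i-1},t_{i+1};t)$ are all routine bookkeeping; the assertion that the signed sum over insertion gaps vanishes \emph{is} the theorem, and that is precisely the step you label ``the main obstacle'' and never carry out. Moreover, the mechanism you announce is doubtful as stated: you propose cancellation ``with opposite sign'' against the $(i\pm1)$-st summands, but the sign $\varepsilon_i$ is attached to the letter $w_i$, not to the position, so adjacent gaps carry the same sign whenever $w_i=w_{i+1}$, and a naive adjacent telescope cannot be the whole story; any actual cancellation must come from the explicit endpoint derivatives of $\mathcal{F}$, which you do not compute, and the vanishing of the boundary contributions at $t_0=0$, $t_{k+1}=1$ is likewise asserted rather than checked. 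The fallback has the same status: reducing $\mathit{Z}(\partial_l(w))=0$ to Ohno's relation is exactly the ``intricate computation'' you defer, and it is essentially the substance of the Ihara--Kaneko--Zagier treatment, so pointing to it amounts to citing the result rather than proving it.

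For comparison, the paper itself gives no proof of Theorem \ref{der_MZVs}: it is quoted from \cite{IKZ06}, where it is deduced from the regularized double shuffle relations via the algebraic identity $\frac{1}{1-yu}\ast w=\frac{1}{1-yu}\sh\Delta_u(w)$, reproduced in this paper as Proposition \ref{IKZ}; comparing coefficients of powers of $u$ and applying the regularization theorem then yields $\mathit{Z}(\partial_l(w))=0$ for $w\in\mathfrak{H}^0$. If you want a self-contained argument, proving that identity in $\mathfrak{H}$ (an induction on words, entirely algebraic) and invoking the regularized double shuffle relation is a far more tractable way to complete your plan than making the analytic Stokes/telescoping argument precise.
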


Similar to the definition of $\mathit{Z}$, 
we define two $\mathbb{Q}$-linear maps $\mathit{Z}_{\mathcal{A}_n} \colon\mathfrak{H}^1 \to \mathcal{A}_n$ and  $\mathit{Z}_{\widehat{\mathcal{A}}} \colon\mathfrak{H}^1 \to \widehat{\mathcal{A}}$
by $\mathit{Z}_{\mathcal{A}_n} (1)=\mathit{Z}_{\widehat{\mathcal{A}}} (1):=1$, $\mathit{Z}_{\mathcal{A}_n} (z_{k_1} \cdots z_{k_r}):=\zeta_{\mathcal{A}_n} (k_1,\ldots ,k_r)$, and $\mathit{Z}_{\widehat{\mathcal{A}}} (z_{k_1} \cdots z_{k_r}):=\zeta_{\widehat{\mathcal{A}}} (k_1,\ldots ,k_r)$. 
We write $\mathcal{A}:=\mathcal{A}_1$.
The derivation relation for FMZVs in  $\mathcal{A}$ was conjectured by Oyama and proved by the first named author. 
\begin{thm}[Derivation relation for FMZVs in $\mathcal{A}$; Murahara {\cite[Theorem 2.1]{Mur16}}]  \label{derF}
 For $l\in\mathbb{Z}_{\ge1}$ and $w \in y\mathfrak{H}x $, we have 
 \begin{align*} 
  \mathit{Z}_\mathcal{A} (R_x^{-1}\partial_{l}(w)) = 0
 \end{align*}
 in the ring $\mathcal{A}$.
\end{thm}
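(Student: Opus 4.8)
The plan is to transport the Ihara--Kaneko--Zagier argument behind Theorem~\ref{der_MZVs} into the ring $\mathcal{A}$, with the two analytic inputs for ordinary MZVs (iterated integrals and the series expansion) replaced by the two structural facts available for $\zeta_{\mathcal{A}}$: the harmonic (stuffle) product formula $\mathit{Z}_\mathcal{A}(w\ast w')=\mathit{Z}_\mathcal{A}(w)\mathit{Z}_\mathcal{A}(w')$, and the reversal relation $\zeta_{\mathcal{A}}(k_1,\dots,k_r)=(-1)^{k_1+\cdots+k_r}\zeta_{\mathcal{A}}(k_r,\dots,k_1)$ coming from the substitution $n_i\mapsto p-n_{r+1-i}$ in the defining sum, together with the special value $\zeta_{\mathcal{A}}(1)=0$. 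First I would make $R_x^{-1}\partial_l(w)$ fully explicit for a word $w=z_{k_1}\cdots z_{k_r}\in y\mathfrak{H}x$: by iterated Leibniz, $\partial_l(w)$ is the signed sum, over the letter-positions of $w$, of the word obtained by replacing that letter by $yz^{l-1}x$, with sign $+1$ if the letter is $x$ and $-1$ if it is $y$; since $w$ ends in $x$, so does every such word, hence $R_x^{-1}$ is defined and $R_x^{-1}\partial_l(w)\in y\mathfrak{H}\subseteq\mathfrak{H}^1$. Expanding $z^{l-1}=(x+y)^{l-1}$ and re-collecting into the $z_k$-basis then presents $\mathit{Z}_\mathcal{A}(R_x^{-1}\partial_l(w))$ as an explicit $\mathbb{Q}$-linear combination of $\zeta_\mathcal{A}$-values that I must show vanishes.

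For the vanishing there are two natural routes. The structural one: transfer, verbatim at the level of $\mathfrak{H}$, the IKZ generating-function identity that makes the derivation relation equivalent to Ohno's relation, and then invoke the Ohno-type relation for FMZVs in $\mathcal{A}$; because that equivalence is purely formal, the only genuine input imported is the $\mathcal{A}$-Ohno relation, and what remains is the bookkeeping needed to accommodate $R_x^{-1}$ and the space $y\mathfrak{H}x$ in place of $\mathfrak{H}^0$. The more hands-on route: prove the vanishing of the explicit combination directly by pairing each $z$-word produced above with its image under word-reversal and using that the reversal relation contributes the sign $(-1)^{k}$ on a weight-$k$ word, which matches the opposite signs in $\partial_l(x)=-\partial_l(y)=yz^{l-1}x$, so that the ``inserted after an $x$'' and ``inserted after a $y$'' contributions cancel. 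This pairing does not close by itself --- reversal carries $y\mathfrak{H}x$ into $x\mathfrak{H}y$ --- so it has to be combined with the harmonic product formula and an induction on $\wt(w)$ to dispose of the boundary terms, in the spirit of how $\zeta_{\mathcal{A}}(1)=0$ and the stuffle product (Hoffman's relation in $\mathcal{A}$) are used to control indices acquiring a leading or trailing $1$.

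I expect the main obstacle to be precisely this combinatorial bookkeeping: arranging the iterated-Leibniz expansion and the reversal pairing so that the cancellation is visibly a telescoping rather than an opaque alternating sum, and in particular tracking how $R_x^{-1}$ breaks the left--right symmetry (it strips the trailing $x$, singling out the last letter of $w$) and how the inserted blocks $yz^{l-1}x$ are reindexed under reversal. By contrast, and unlike in Theorem~\ref{der_MZVs}, no regularization intervenes --- every sum defining $\zeta_{\mathcal{A}}$ is finite, $\mathit{Z}_\mathcal{A}$ is honestly defined on all of $\mathfrak{H}^1$, and $R_x^{-1}$ raises no convergence question --- which should make the finite analogue somewhat more transparent than the original.
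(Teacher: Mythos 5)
There is a genuine gap: neither of your two routes is actually carried out, and each rests on an input that is either unavailable or strictly weaker than what is needed. For route 1, the IKZ equivalence between the derivation relation and Ohno's relation is \emph{not} purely formal: it uses the classical duality of MZVs, which fails for FMZVs in $\mathcal{A}$ (what survives is Hoffman's duality, a different statement), and the Ohno-type relation that is actually known in $\mathcal{A}$ (Oyama's) is formulated with respect to the Hoffman dual rather than the classical dual. So ``transfer the equivalence verbatim and invoke the $\mathcal{A}$-Ohno relation'' does not go through; the very presence of $R_x^{-1}$ and the replacement of $\mathfrak{H}^0$ by $y\mathfrak{H}x$ already shows the transfer is not verbatim, and historically the statement remained a conjecture of Oyama precisely because it does not drop out of the known $\mathcal{A}$-relations by formal bookkeeping. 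For route 2, your toolkit --- the stuffle homomorphism property, the reversal relation $\zeta_{\mathcal{A}}(k_1,\dots,k_r)=(-1)^{k_1+\cdots+k_r}\zeta_{\mathcal{A}}(k_r,\dots,k_1)$, and $\zeta_{\mathcal{A}}(1)=0$ --- is too weak. The essential extra input is the full mod-$p$ shuffle relation $\mathit{Z}_{\mathcal{A}}(w_1\sh z_{k_1}\cdots z_{k_r})=(-1)^{k_1+\cdots+k_r}\mathit{Z}_{\mathcal{A}}(w_1z_{k_r}\cdots z_{k_1})$, of which your reversal relation is only the $w_1=1$ case; without it (or something of comparable strength) the alternating Leibniz sum has no reason to telescope, and you yourself concede the pairing ``does not close by itself'' without saying what closes it.

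For comparison, the paper does not reprove Theorem \ref{derF} at all: it cites \cite{Mur16} and notes (Remark after Theorem \ref{main}) that it is equivalent to Ihara's identity $\mathit{Z}_{\mathcal{A}}(\beta_{m,0}R_x^{-1}\Delta_u R_x(w))=0$, which falls out of the proof of Theorem \ref{main} specialized to $\mathcal{A}$. That working proof evaluates both sides of the IKZ generating-function identity $\frac{1}{1-yu}\ast w=\frac{1}{1-yu}\sh\Delta_u(w)$ (Proposition \ref{IKZ}) under $\mathit{Z}_{\mathcal{A}}$: the harmonic side vanishes because $\zeta_{\mathcal{A}}(1,\dots,1)=0$, and the shuffle side is converted by the mod-$p$ shuffle relation (the $n=0$ case of Proposition \ref{shuffle}) into the $R_x^{-1}\Delta_u R_x$ expression, whose coefficients encode $R_x^{-1}\partial_l$. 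So the two ingredients your sketch is missing --- the full shuffle relation in $\mathcal{A}$ and the $\Delta_u$ identity that packages the derivations --- are exactly the ones that make the argument close; if you add them, your route 2 essentially becomes the paper's proof rather than an independent one.
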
 
In this paper, we prove a generalization of the above theorem in the ring $\widehat{\mathcal{A}}$. 
For non-negative integers $m$ and $n$, we define $\beta_{m,n}\colon \mathbb{Q}\langle\langle x,y \rangle\rangle[[u,v]] \rightarrow\frak{H}$ by setting $\beta_{m,n}(w)$ to be the coefficients of $u^mv^n$ in $w$. 
\begin{thm}[Main theorem] \label{main}
 For $m\in\mathbb{Z}_{\ge0}$ and  $w\in y\mathfrak{H}$, we have
 \begin{align*}
  &\sum_{n=0}^\infty \mathit{Z}_{\widehat{\mathcal{A}}}
  \left(\beta_{m,n}R_{x}^{-1} \Delta_u R_{x} \left(w-wyu\frac{1}{1+xu} \cdot
  \frac{xv}{1-xv} \right)\right)\boldsymbol{p}^n\\
  &=\mathit{Z}_{\widehat{\mathcal{A}}}(w)\mathit{Z}_{\widehat{\mathcal{A}}}\left(\beta_{m,0}\left(\frac{1}{1-yu}\right)\right)
 \end{align*}
 in the ring $\widehat{\mathcal{A}}$, where $\Delta_u$ is an automorphism on $\mathbb{Q}\langle\langle x,y \rangle\rangle[[u,v]]$ 
 given by
 \[
  \Delta_u:=\exp \left( \sum_{l=1}^\infty \frac{\partial_l}{l}(-u)^l \right).
 \]
\end{thm}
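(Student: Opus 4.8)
The plan is to strip the statement down, by a chain of purely algebraic manipulations, to an identity that can be settled from Rosen's reversal formula for $\zeta_{\widehat{\mathcal{A}}}$ together with the harmonic product; the combinatorial bookkeeping in that last step is where the real work lies.

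\emph{Making $\Delta_u$ explicit.} First I would compute $\Delta_u$ in closed form. Since the derivations $\partial_l$ commute pairwise (Ihara--Kaneko--Zagier), $\Delta_u=\exp\bigl(\sum_{l\ge1}\frac{(-u)^l}{l}\partial_l\bigr)$ satisfies $\frac{d}{du}\Delta_u=-\Delta_u\circ\bigl(\sum_{l\ge1}(-u)^{l-1}\partial_l\bigr)$, and solving this (or checking it by induction on the $u$-degree) gives
\[
 \Delta_u(x)=\frac{1}{1+uy}\,x,\qquad \Delta_u(y)=\frac{1}{1+uy}\,y\,(1+uz),\qquad \Delta_u(z)=z .
\]

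\emph{Telescoping the left-hand operator.} Substituting these formulas into $R_x^{-1}\Delta_u R_x$ applied to $w-wyu\frac{1}{1+xu}\cdot\frac{xv}{1-xv}$ and simplifying, all the auxiliary factors cancel (one uses $(1-vx)(1+uy-vx)^{-1}=(1+uy(1-vx)^{-1})^{-1}$), and one is left with
\[
 R_x^{-1}\Delta_u R_x\Bigl(w-wyu\tfrac{1}{1+xu}\cdot\tfrac{xv}{1-xv}\Bigr)=\Delta_u(w)\cdot\frac{1}{1+u\sigma_v},\qquad \sigma_v:=\frac{y}{1-vx}=\sum_{k\ge1}v^{k-1}z_k .
\]
Because $\boldsymbol{p}$ is topologically nilpotent in $\widehat{\mathcal{A}}$ we may put $v=\boldsymbol{p}$; writing $s:=\sum_{k\ge1}\boldsymbol{p}^{k-1}z_k$ and letting $\mathit{Z}_{\widehat{\mathcal{A}}}$ act coefficient-wise in $u$, the Main Theorem becomes
\[
 \mathit{Z}_{\widehat{\mathcal{A}}}\!\left[\Delta_u(w)\cdot\frac{1}{1+us}\right]=\mathit{Z}_{\widehat{\mathcal{A}}}(w)\cdot\mathit{Z}_{\widehat{\mathcal{A}}}\!\left[\frac{1}{1-yu}\right].
\]

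\emph{Bringing in the reversal formula.} Rosen's reversal formula can be written $\mathit{Z}_{\widehat{\mathcal{A}}}=\mathit{Z}_{\widehat{\mathcal{A}}}\circ\mathfrak{S}\circ\rho$, where $\rho$ reverses the sequence of $z$-blocks of a word and $\mathfrak{S}$ is the algebra homomorphism with $\mathfrak{S}(z_k)=(-1)^k\sum_{e\ge0}\binom{k+e-1}{e}\boldsymbol{p}^e z_{k+e}$; here $\mathfrak{S}^2=\mathrm{id}$, and both $\mathfrak{S}$ and $\rho$ are homomorphisms for the concatenation and for the harmonic product. The crucial point is that $\rho$ fixes $s$ and $\mathfrak{S}(s)=-y$ (which amounts to the binomial identity $\sum_{k=1}^{N}(-1)^{k}\binom{N-1}{k-1}=-\delta_{N,1}$), so $\mathfrak{S}\bigl(\frac{1}{1+us}\bigr)=\frac{1}{1-yu}$. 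Applying $\mathit{Z}_{\widehat{\mathcal{A}}}=\mathit{Z}_{\widehat{\mathcal{A}}}\circ\mathfrak{S}\circ\rho$ to the reduced left-hand side then converts the tail $\frac{1}{1+us}$ into $\frac{1}{1-yu}$, and the statement to be proved becomes
\[
 \mathit{Z}_{\widehat{\mathcal{A}}}\!\left[\frac{1}{1-yu}\cdot\mathfrak{S}\rho\bigl(\Delta_u(w)\bigr)\right]=\mathit{Z}_{\widehat{\mathcal{A}}}(w)\cdot\mathit{Z}_{\widehat{\mathcal{A}}}\!\left[\frac{1}{1-yu}\right].
\]

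\emph{The main obstacle.} It remains to establish this last identity. Using the harmonic product $\mathit{Z}_{\widehat{\mathcal{A}}}(a)\mathit{Z}_{\widehat{\mathcal{A}}}(b)=\mathit{Z}_{\widehat{\mathcal{A}}}(a\sh b)$ (and, on the reversal side, its $\boldsymbol{p}$-deformed conjugate $\harb$) one rewrites the right-hand side as $\mathit{Z}_{\widehat{\mathcal{A}}}\bigl[w\sh\frac{1}{1-yu}\bigr]$, and the task reduces to showing that, modulo $\ker\mathit{Z}_{\widehat{\mathcal{A}}}$, the left-to-right concatenation $\frac{1}{1-yu}\cdot\mathfrak{S}\rho\Delta_u(w)$ differs from the harmonic product $w\sh\frac{1}{1-yu}$ only by $\boldsymbol{p}$-corrections that the reversal formula itself absorbs. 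The hard part is exactly this comparison: one has to follow, term by term, how the expansion coefficients of $\Delta_u$ (which package the compositions $\partial_{l_1}\cdots\partial_{l_j}$) interact with the block-splitting and insertion terms of $\sh$ and with the $\boldsymbol{p}$-weights produced by $\mathfrak{S}$, and to verify that everything fits together — this is the step I expect to require the bulk of the computation.
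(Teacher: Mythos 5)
Your reduction steps are fine as far as they go, but the proof stops exactly where the theorem's content begins, so there is a genuine gap. The closed form of $\Delta_u$ and the ``telescoping'' identity
$R_x^{-1}\Delta_u R_x\bigl(w-wyu\frac{1}{1+xu}\cdot\frac{xv}{1-xv}\bigr)=\Delta_u(w)\,\{1+uy(1-xv)^{-1}\}^{-1}$
are correct (this is the same algebraic manipulation the paper performs, using $\Delta_u(x)=(1+yu)^{-1}x$ and $x=\Delta_u\bigl(x+y(1+xu)^{-1}xu\bigr)$), and your observation that the reversal map sends $s=\sum_k\boldsymbol{p}^{k-1}z_k$ to $-y$, hence $\frac{1}{1+us}$ to $\frac{1}{1-yu}$, is a valid repackaging of the $w_1=1$ case of the Jarossay--Seki formula. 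But the identity you are left with,
$\mathit{Z}_{\widehat{\mathcal{A}}}\bigl[\frac{1}{1-yu}\cdot\mathfrak{S}\rho(\Delta_u(w))\bigr]=\mathit{Z}_{\widehat{\mathcal{A}}}(w)\,\mathit{Z}_{\widehat{\mathcal{A}}}\bigl[\frac{1}{1-yu}\bigr]$,
is not a bookkeeping exercise: after your (reversible) transformations it is equivalent to the theorem itself, and you explicitly defer it as ``the bulk of the computation'' without an argument. The missing idea is precisely the Ihara--Kaneko--Zagier identity $\frac{1}{1-yu}\ast w=\frac{1}{1-yu}\sh\Delta_u(w)$ (Proposition \ref{IKZ}), which, combined with harmonic multiplicativity (Proposition \ref{harmonic}) and the full two-argument form of the Jarossay--Seki shuffle formula (Proposition \ref{shuffle}), is what makes ``everything fit together'' in the paper's proof; your proposal neither invokes it nor reproves it, and a direct term-by-term comparison of the expansion coefficients of $\Delta_u$ with the $\boldsymbol{p}$-weights of $\mathfrak{S}$ would essentially amount to reproving that result.

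A secondary but substantive error: you write $\mathit{Z}_{\widehat{\mathcal{A}}}(a)\mathit{Z}_{\widehat{\mathcal{A}}}(b)=\mathit{Z}_{\widehat{\mathcal{A}}}(a\sh b)$ and call this ``the harmonic product.'' In $\widehat{\mathcal{A}}$ only the harmonic (stuffle) product $\ast$ is compatible with $\mathit{Z}_{\widehat{\mathcal{A}}}$; the shuffle product is not, and the $\boldsymbol{p}$-correction terms quantifying its failure are exactly the content of Proposition \ref{shuffle}. If you meant $\ast$, then the right-hand side becomes $\mathit{Z}_{\widehat{\mathcal{A}}}\bigl(w\ast\frac{1}{1-yu}\bigr)$, and the bridge from there to your left-hand side is again Proposition \ref{IKZ} plus Proposition \ref{shuffle} --- i.e.\ the paper's route, which goes: Proposition \ref{harmonic} on the harmonic side, Proposition \ref{IKZ} to pass to $\frac{1}{1-yu}\sh\Delta_u(w)$, Proposition \ref{shuffle} with $w_2=y^i$ to trade the shuffle for concatenation with $\boldsymbol{p}$-powers, and then the same algebraic identity you derived for $\{1+yu(1-xv)^{-1}\}^{-1}$. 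Without supplying that bridge, the proposal is not a proof.
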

\begin{rem}
Since $\mathit{Z}_{\mathcal{A}}(1,\dots,1)=0$ (see, for example, Hoffman \cite[eq.(15)]{Hof15}), 
Theorem \ref{main} is a generalization of the equality
\[
 \mathit{Z}_{\mathcal{A}}
 \left(\beta_{m,0}R_{x}^{-1} \Delta_u R_{x} (w) \right)
 =0
\]
for $m\in\mathbb{Z}_{\ge0}$, which was obtained by Ihara (see Horikawa-Murahara-Oyama \cite[Section 5.3]{HMO18}). 
We note that this is equivalent to Theorem \ref{derF}.
\end{rem}
As a corollary of our main theorem, we have Hoffman's relation (see Hoffman \cite[Theorem5.1]{Hof92} for original formula) for FMZVs in
$\widehat{\mathcal{A}}$.
\begin{cor} \label{HofF}
 For $w\in y\mathfrak{H}$, we have
 \[
  \mathit{Z}_{\widehat{\mathcal{A}}}
  \left(R_{x}^{-1} \partial_{1} (wx)\right)
  =-\sum_{n=1}^{\infty}
   \mathit{Z}_{\widehat{\mathcal{A}}}
   \left(wyx^n\right)\boldsymbol{p}^n
   -\mathit{Z}_{\widehat{\mathcal{A}}} (w)
   \mathit{Z}_{\widehat{\mathcal{A}}} (y)
 \]
 in the ring $\widehat{\mathcal{A}}$.
\end{cor}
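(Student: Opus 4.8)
The plan is to deduce the corollary from Theorem \ref{main} by specializing to $m=1$ and reading off the coefficient of $u$. Abbreviate $G := w - wyu\frac{1}{1+xu}\cdot\frac{xv}{1-xv}$, the argument to which $\Delta_u$ is applied in Theorem \ref{main}. Expanding the geometric series $\frac{1}{1+xu}=\sum_{j\ge0}(-1)^jx^ju^j$ gives $G = w - wy\frac{xv}{1-xv}\,u + O(u^{2})$. On the operator side, the definition $\Delta_u=\exp\bigl(\sum_{l\ge1}\frac{\partial_l}{l}(-u)^l\bigr)$ yields $\Delta_u = \mathrm{id} - u\,\partial_1 + O(u^{2})$.

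First I would compute the coefficient of $u^1$ in $R_x^{-1}\Delta_u R_x(G)$. Because $R_x$ and $R_x^{-1}$ act only on the $\mathfrak{H}$-part and hence commute with $\beta_{m,n}$, and because $\partial_1$ preserves the right ideal $\mathfrak{H}x$ (by Leibniz's rule, $\partial_1(w'x)=\partial_1(w')x+w'yx$), this coefficient equals $-R_x^{-1}\partial_1(wx) + R_x^{-1}\bigl(-wy\frac{xv}{1-xv}x\bigr)$. Using $\frac{xv}{1-xv}x=\sum_{n\ge1}x^{n+1}v^n$ and the fact that $R_x^{-1}$ simply deletes the trailing $x$, the second summand is $-wy\frac{xv}{1-xv}=-wy\sum_{n\ge1}x^nv^n$. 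Applying $\beta_{1,n}$ (the coefficient of $u^1v^n$) and using that $w$ contains no $v$, one gets $\beta_{1,0}R_x^{-1}\Delta_u R_x(G)=-R_x^{-1}\partial_1(wx)$ and $\beta_{1,n}R_x^{-1}\Delta_u R_x(G)=-wyx^n$ for $n\ge1$.

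Next I would evaluate the right-hand side of Theorem \ref{main} at $m=1$: since $\frac{1}{1-yu}=\sum_{k\ge0}y^ku^k$ involves no $v$, we have $\beta_{1,0}\bigl(\frac{1}{1-yu}\bigr)=y$, so that side equals $\mathit{Z}_{\widehat{\mathcal{A}}}(w)\,\mathit{Z}_{\widehat{\mathcal{A}}}(y)$. Plugging the previous paragraph's computations into the left-hand side of Theorem \ref{main} with $m=1$ therefore gives
\[
 -\mathit{Z}_{\widehat{\mathcal{A}}}\bigl(R_x^{-1}\partial_1(wx)\bigr) - \sum_{n=1}^{\infty}\mathit{Z}_{\widehat{\mathcal{A}}}(wyx^n)\,\boldsymbol{p}^n = \mathit{Z}_{\widehat{\mathcal{A}}}(w)\,\mathit{Z}_{\widehat{\mathcal{A}}}(y),
\]
and solving for $\mathit{Z}_{\widehat{\mathcal{A}}}(R_x^{-1}\partial_1(wx))$ is exactly the claimed identity. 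Since everything follows mechanically once Theorem \ref{main} is available, there is no genuine obstacle here; the only points needing a little attention are the interaction of $R_x^{-1}$ with the series $\frac{xv}{1-xv}$ and the observation that the infinite sum over $n$ converges in $\widehat{\mathcal{A}}$ because $\boldsymbol{p}^n\to0$.
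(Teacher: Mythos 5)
Your proposal is correct and follows essentially the same route as the paper: specialize Theorem \ref{main} to $m=1$, note $\beta_{1,0}\bigl(\frac{1}{1-yu}\bigr)=y$, and compute that $\beta_{1,n}R_x^{-1}\Delta_u R_x$ of the given expression equals $-R_x^{-1}\partial_1(wx)$ for $n=0$ and $-wyx^n$ for $n\ge1$. Your extra details (first-order expansion of $\Delta_u$ and the check that $\partial_1(wx)\in\mathfrak{H}x$) only make explicit what the paper leaves implicit.
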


\section{Proof of the main theorem}
\subsection{Notation}
The harmonic product $\ast$ and the shuffle product $\sh$ on $\mathfrak{H}^{1}$ are defined by
\begin{align*}
 1\ast w&=w\ast 1:= w, \\
 z_{k} w_{1} \ast z_{l} w_{2}&:= z_{k} (w_{1} \ast z_{l} w_{2}) + z_{l} (z_{k} w_{1} \ast w_{2}) + z_{k+l} (w_{1} \ast w_{2}),  \\
 1 \sh w&= w\sh 1:= w , \\
 u w_{1} \sh v w_{2}&:= u (w_{1} \sh v w_{2}) + v (u w_{1} \sh w_{2})
\end{align*}
($k,l \in \mathbb{Z}_{\ge1}$, $u,v\in\{x,y\}$ and $w$, $w_{1}$, $w_{2}$ are words in $\mathfrak{H}^{1}$), together with $\mathbb{Q}$-bilinearity. 
The harmonic product $\ast$ and the shuffle product $\sh$ are commutative and associative, therefore $\mathfrak{H}^{1}$ is a $\mathbb{Q}$-commutative algebra with respect to $\ast$ and $\sh$, respectively  
(see Hoffman \cite{Hof97}).

\subsection{Propositions and lemmas}
In this subsection, we prepare some propositions which will be used later. 
\begin{prop} \label{harmonic}
 For $w_1,\,w_2\in\mathfrak{H}^1$, we have
 \begin{align*}
  \mathit{Z}_{\widehat{\mathcal{A}}}(w_1\ast w_2)
  &=\mathit{Z}_{\widehat{\mathcal{A}}}(w_1) \mathit{Z}_{\widehat{\mathcal{A}}}(w_2)
 \end{align*}
 in the ring $\widehat{\mathcal{A}}$.
\end{prop}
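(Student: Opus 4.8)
The plan is to derive this from the classical harmonic product formula for truncated multiple harmonic sums, read as an exact identity in $\prod_{p}\mathbb{Z}_{p}$, and then to transport it to $\widehat{\mathcal{A}}$ along the ring homomorphism $\pi$. For a word $z_{k_{1}}\cdots z_{k_{r}}\in\mathfrak{H}^{1}$, a prime $p$, and an integer $N\ge1$, I set
\[
 H_{<N}(z_{k_{1}}\cdots z_{k_{r}}):=\sum_{1\le n_{1}<\cdots<n_{r}<N}\frac{1}{n_{1}^{k_{1}}\cdots n_{r}^{k_{r}}},\qquad H_{<N}(1):=1,
\]
and extend $H_{<N}$ to $\mathfrak{H}^{1}$ by $\mathbb{Q}$-linearity. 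Since every integer $n$ with $1\le n\le p-1$ is a unit in $\mathbb{Z}_{p}$, each $H_{<p}(w)$ is a well-defined element of $\mathbb{Z}_{p}$, and by the definition of $\mathit{Z}_{\widehat{\mathcal{A}}}$ we have $\mathit{Z}_{\widehat{\mathcal{A}}}(w)=\pi\bigl((H_{<p}(w))_{p}\bigr)$ for every $w\in\mathfrak{H}^{1}$.

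The key step is the combinatorial lemma
\[
 H_{<N}(w_{1})\,H_{<N}(w_{2})=H_{<N}(w_{1}\ast w_{2})\qquad(w_{1},w_{2}\in\mathfrak{H}^{1},\ N\ge1),
\]
an identity of rational numbers, which I would prove by induction on $\wt(w_{1})+\wt(w_{2})$. When one of the factors equals $1$ the claim is trivial. Otherwise write $w_{1}=w_{1}'z_{k}$ and $w_{2}=w_{2}'z_{l}$; using $H_{<N}(w'z_{k})=\sum_{1\le a<N}a^{-k}H_{<a}(w')$ for both factors, multiplying, and splitting the double sum over $(a,b)\in\{1,\dots,N-1\}^{2}$ according to $a<b$, $a>b$, $a=b$, the three parts become (after re-summing and invoking the inductive hypothesis) exactly $H_{<N}$ applied to $(w_{1}'z_{k}\ast w_{2}')z_{l}$, $(w_{1}'\ast w_{2}'z_{l})z_{k}$, and $(w_{1}'\ast w_{2}')z_{k+l}$ respectively; since these sum to $w_{1}\ast w_{2}$, the lemma follows. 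Taking $N=p$ for each prime $p$, the lemma says precisely that $(H_{<p}(w_{1}))_{p}\cdot(H_{<p}(w_{2}))_{p}=(H_{<p}(w_{1}\ast w_{2}))_{p}$ in $\prod_{p}\mathbb{Z}_{p}$, and applying the ring homomorphism $\pi$ gives $\mathit{Z}_{\widehat{\mathcal{A}}}(w_{1})\mathit{Z}_{\widehat{\mathcal{A}}}(w_{2})=\mathit{Z}_{\widehat{\mathcal{A}}}(w_{1}\ast w_{2})$.

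There is no serious obstacle here: no limiting process is involved, since every identity above is an exact identity of rational numbers valid prime by prime, and $\pi$ is a ring homomorphism. The only points needing a little care are the bookkeeping in the inductive step — matching the threefold splitting of the summation domain with the three terms of the recursion defining $\ast$ — and the observation that the ``peel off the last letter'' form of the harmonic product used above agrees with the ``peel off the first letter'' form in the paper, which is a standard consequence of the commutativity and associativity of $\ast$. This argument is the direct analogue of the classical proof of the harmonic product formula for multiple harmonic sums (and of $\mathit{Z}_{\mathcal{A}}(w_{1}\ast w_{2})=\mathit{Z}_{\mathcal{A}}(w_{1})\mathit{Z}_{\mathcal{A}}(w_{2})$).
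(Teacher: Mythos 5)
Your argument is correct and is essentially the paper's own proof, which simply asserts that the identity follows from the definition of the harmonic product: the stuffle identity $H_{<p}(w_1)H_{<p}(w_2)=H_{<p}(w_1\ast w_2)$ holds exactly for each prime, and applying the ring homomorphism $\pi$ gives the statement. The only point you gloss over slightly is the equivalence of the ``peel off the last letter'' recursion with the paper's ``first letter'' definition of $\ast$ (commutativity and associativity alone do not quite give this, but it is a standard fact, e.g.\ proved by induction or via the word-reversal map), and this does not affect the validity of the proof.
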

\begin{proof}
 This is obtained by the definition of the harmonic product. 
\end{proof}
\begin{prop}[Jarossay {\cite{Jar17}}, Seki {\cite[Theorem 6.4]{SekDoc}}] \label{shuffle}
For $w_1,\,w_2\in\mathfrak{H}^1$ with $w_2=z_{k_1}\cdots z_{k_r}$, we have
\begin{align*}
&\mathit{Z}_{\widehat{\mathcal{A}}}(w_1\sh w_2)\\
&=(-1)^{k_1+\cdots+k_r}\sum_{l_1,\ldots,l_r\in\mathbb{Z}_{\ge0}}
 \left[\prod_{i=1}^r\binom{k_i+l_i-1}{l_i}\right]
 \mathit{Z}_{\widehat{\mathcal{A}}}(w_1z_{k_r+l_r}\cdots z_{k_1+l_1}) \boldsymbol{p}^{l_1+\cdots+l_r}
\end{align*}
in the ring $\widehat{\mathcal{A}}$.
\end{prop}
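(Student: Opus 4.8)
The plan is to reduce everything to the single reflection $n\mapsto p-n$ on the summation indices, combined with the $p$-adic expansion
\[
 \frac{1}{(p-m)^{k}}=(-1)^{k}\sum_{l\ge 0}\binom{k+l-1}{l}\frac{p^{l}}{m^{k+l}}\qquad(1\le m\le p-1),
\]
which converges in $\mathbb{Z}_p$ because $m$ is coprime to $p$, hence converges in $\widehat{\mathcal{A}}$ after applying $\pi$; here I use that $\boldsymbol{p}^{n}=0$ in $\mathcal{A}_{n}$, so every series in $\boldsymbol{p}$ that appears is automatically convergent in $\widehat{\mathcal{A}}$. By $\mathbb{Q}$-linearity I may assume $w_{1}=z_{a_{1}}\cdots z_{a_{s}}$ is a monomial word. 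First I would treat the reversal case $w_{1}=1$, in order to isolate the origin of the binomial coefficients, the sign $(-1)^{k_{1}+\cdots+k_{r}}$, and the weights $\boldsymbol{p}^{l_{1}+\cdots+l_{r}}$: applying $n_{i}\mapsto p-n_{r+1-i}$ to the defining sum of $\zeta_{\widehat{\mathcal{A}}}(k_{1},\dots,k_{r})$ for each $p$ and expanding each factor by the displayed identity reproduces exactly the right-hand side of Proposition \ref{shuffle} with $w_{1}$ deleted.

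The second step is to run the same reflection on the right-hand side for general $w_{1}$, but applied only to the block coming from $w_{2}$. In $\mathit{Z}_{\widehat{\mathcal{A}}}(w_{1}z_{k_{r}+l_{r}}\cdots z_{k_{1}+l_{1}})$ the summation runs over $1\le n_{1}<\cdots<n_{s}<m_{1}<\cdots<m_{r}\le p-1$; substituting $\tilde m_{i}:=p-m_{r+1-i}$ and summing the resulting binomial series by the identity above collapses the entire right-hand side to the single double sum
\[
 D:=\sum_{\substack{1\le n_{1}<\cdots<n_{s}\le p-1\\ 1\le \tilde m_{1}<\cdots<\tilde m_{r}\le p-1\\ n_{s}+\tilde m_{r}\le p-1}}\frac{1}{n_{1}^{a_{1}}\cdots n_{s}^{a_{s}}\,\tilde m_{1}^{k_{1}}\cdots\tilde m_{r}^{k_{r}}}\qquad(\text{image in }\widehat{\mathcal{A}}).
\]
Crucially, the only constraint coupling the two chains is $n_{s}+\tilde m_{r}\le p-1$, because the reflection reverses the $m$-chain and turns the lone inequality $n_{s}<m_{1}$ into $n_{s}<p-\tilde m_{r}$; all signs $(-1)^{k_{i}}$ are absorbed by the prefactor $(-1)^{k_{1}+\cdots+k_{r}}$, since $\prod_{i}(-1)^{k_{i}}\cdot(-1)^{k_{1}+\cdots+k_{r}}=1$.

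It remains to prove $\mathit{Z}_{\widehat{\mathcal{A}}}(w_{1}\sh w_{2})=D$, and this is where the real work lies. I would argue by induction on the depth $r$ of $w_{2}$, peeling off one letter at a time through the recursive definition of $\sh$ and using a partial-fraction (Abel-summation) identity — the prototype being $\frac{1}{a(c-a)}=\frac1c\bigl(\frac1a+\frac1{c-a}\bigr)$, which in the smallest case $w_{1}=w_{2}=y$ already turns $2\zeta_{\widehat{\mathcal{A}}}(1,1)$ into $\sum_{n+m\le p-1}\frac1{nm}=D$ — to convert each interleaving permitted by the shuffle into a contribution to the single top-coupling constraint $n_{s}+\tilde m_{r}\le p-1$. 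The main obstacle is exactly this combinatorial matching: the shuffle product records all interleavings of the two index chains, whereas $D$ records two independent increasing chains joined only at their largest entries, and one must show by careful bookkeeping that the reflection collapses the former description into the latter. A cleaner alternative for this step is the connected-sum method of Seki--Yamamoto: one inserts a suitable connector (a beta-type factor) and proves a transport relation moving it across the two chains, which dispenses with the explicit partial-fraction induction. Either route completes the proof.
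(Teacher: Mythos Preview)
The paper does not prove this proposition at all; it is simply quoted with attribution to Jarossay and Seki, so there is no in-paper argument to compare yours against.

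That said, your strategy is the standard one and is essentially correct. The reflection-plus-binomial step collapsing the right-hand side to the coupled sum
\[
 D_{p}=\sum_{\substack{0<n_{1}<\cdots<n_{s}<p\\ 0<m_{1}<\cdots<m_{r}<p\\ n_{s}+m_{r}<p}}\frac{1}{n_{1}^{a_{1}}\cdots n_{s}^{a_{s}}\,m_{1}^{k_{1}}\cdots m_{r}^{k_{r}}}
\]
is carried out correctly. For the remaining identity $\mathit{Z}_{\widehat{\mathcal{A}}}(w_{1}\sh w_{2})=D_{p}$, note that it is in fact an \emph{exact} rational equality for every truncation height $N$ in place of $p$, and there is a proof cleaner than either route you sketch: the one-variable multiple polylogarithm $L_{w}(z)=\sum_{0<n_{1}<\cdots<n_{r}}z^{n_{r}}/(n_{1}^{k_{1}}\cdots n_{r}^{k_{r}})$ is an iterated integral in $\frac{dt}{t}$ and $\frac{dt}{1-t}$, hence satisfies $L_{w_{1}}(z)L_{w_{2}}(z)=L_{w_{1}\sh w_{2}}(z)$ as formal power series; equating the coefficient of $z^{c}$ on both sides and summing over $1\le c\le N-1$ gives exactly $D_{N}=Z_{N}(w_{1}\sh w_{2})$. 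Your proposed partial-fraction induction would eventually recover this, but be aware that the shuffle recursion peels off \emph{letters} $x,y$, not blocks $z_{k}$, so ``induction on the depth $r$ of $w_{2}$'' is not quite the right organizing parameter---you would need to induct on the total letter length and handle intermediate words not lying in $\mathfrak{H}^{1}$. The connected-sum method would also work but is heavier than necessary here.
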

\begin{prop}[Ihara-Kaneko-Zagier {\cite[Corollary 3]{IKZ06}}] \label{IKZ}
For $w\in\mathfrak{H}^{1}$, we have
\begin{align*}
\frac{1}{1-yu} \ast w=\frac{1}{1-yu} \sh\Delta_u(w).
\end{align*}
\end{prop}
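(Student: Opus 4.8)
The plan is to treat this as a purely algebraic identity in $\mathfrak{H}^1[[u]]$ and reduce it to a comparison of two one-parameter families of $\mathbb{Q}$-linear operators on $\mathfrak{H}^1$. By $\mathbb{Q}$-linearity I may assume $w$ is a word. The key structural observation I would exploit is that $\frac{1}{1-yu}=\sum_{n\ge0}z_1^{\,n}u^n$ is the shuffle-exponential $\exp_{\sh}(yu):=\sum_{n\ge0}\frac{1}{n!}(yu)^{\sh n}$, since $y^{\sh n}=n!\,y^n$. Consequently shuffle-multiplication by $\frac{1}{1-yu}$ is invertible, its inverse being shuffle-multiplication by $\exp_{\sh}(-yu)$. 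Thus the assertion is equivalent to the operator identity $\Delta_u=\exp_{\sh}(-yu)\sh\bigl(\frac{1}{1-yu}\ast(-)\bigr)$ on $\mathfrak{H}^1$, and both sides reduce to the identity map at $u=0$ (as $\Delta_u|_{u=0}=\mathrm{id}$ and $\frac{1}{1-yu}|_{u=0}=1$).

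I would then prove this by showing that both families satisfy the same first-order linear differential equation in $u$ with the same value at $u=0$. For the left-hand side I would first record the sublemma that the derivations $\partial_k$ and $\partial_l$ commute: their commutator is again a derivation, so it suffices to verify $[\partial_k,\partial_l](x)=[\partial_k,\partial_l](y)=0$ on the generators. Granting this, $D(u):=\sum_{l\ge1}\frac{\partial_l}{l}(-u)^l$ commutes with $\frac{d}{du}D(u)$, whence $\frac{d}{du}\Delta_u=\Delta_u\circ\bigl(-\sum_{l\ge1}\partial_l(-u)^{l-1}\bigr)$. For the right-hand side I would differentiate $\Phi_u(w):=\exp_{\sh}(-yu)\sh\bigl(\frac{1}{1-yu}\ast w\bigr)$ using the Leibniz rule for $\sh$ together with the derivative rule $\frac{d}{du}\frac{1}{1-yu}=\frac{1}{1-yu}\sh y$ (which holds because $\frac{1}{1-yu}=\exp_{\sh}(yu)$). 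Since the two families then obey the same equation and agree at $u=0$, they coincide.

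The main obstacle is precisely this last computation: showing $\frac{d}{du}\Phi_u=\Phi_u\circ\bigl(-\sum_{l\ge1}\partial_l(-u)^{l-1}\bigr)$, i.e. that the infinitesimal discrepancy between harmonic and shuffle multiplication by $\frac{1}{1-yu}$ is governed exactly by the generating series of the $\partial_l$. This is the point where the specific shape $\partial_l(x)=yz^{l-1}x$, $\partial_l(y)=-yz^{l-1}x$ with $z=x+y$ is forced, the powers $z^{l-1}$ and the sign pattern $(-u)^{l-1}$ emerging from resumming the geometric series hidden in $\frac{1}{1-yu}$. As a concrete equivalent target one may instead extract the coefficient of $u^N$, reducing the claim to $z_1^{\,N}\ast w=\sum_{M=0}^{N}y^{\,N-M}\sh\beta_{M,0}(\Delta_u(w))$ for every $N\ge0$, which can be attacked by induction on $N$ via the defining recursions of $\ast$ and $\sh$. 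Finally, since the statement is Corollary~3 of Ihara--Kaneko--Zagier, it may alternatively simply be invoked; the reformulation above is essentially their argument.
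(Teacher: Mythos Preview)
The paper does not prove this proposition at all: it is stated with the attribution ``Ihara--Kaneko--Zagier [Corollary~3]'' and used as a black box in the proof of the main theorem. Your closing remark---that the statement may simply be invoked---is therefore exactly the paper's approach.

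Your sketch goes further, outlining a self-contained argument. The structural reductions are sound: $\frac{1}{1-yu}=\exp_{\sh}(yu)$ is correct, shuffle-multiplication by it is invertible, both sides agree at $u=0$, and the derivations $\partial_l$ do commute (your proposed check on generators works, using $\partial_l(z)=0$). However, you explicitly flag the heart of the argument---that $\Phi_u$ satisfies the same differential equation as $\Delta_u$---as ``the main obstacle'' and do not carry it out. So what you have is a plausible plan rather than a proof; the crucial step linking the harmonic/shuffle discrepancy to the generating series $\sum_{l\ge1}\partial_l(-u)^{l-1}$ remains unverified in your write-up. If you intend to include a proof rather than a citation, that computation (or the equivalent induction on $N$ you mention) must actually be executed.
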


\subsection{Proof of Theorem \ref{main}}
By Proposition \ref{harmonic}, we have
\begin{align} \label{lhs}
 \mathit{Z}_{\widehat{\mathcal{A}}}
  \left(\beta_{m,0} \left( \frac{1}{1-yu} \ast w\right)\right)
 =\mathit{Z}_{\widehat{\mathcal{A}}}
  \left(\beta_{m,0} \left(\frac{1}{1-yu}\right)\right)
  \mathit{Z}_{\widehat{\mathcal{A}}}(w).
\end{align}
On the other hand, we have
\[
 \mathit{Z}_{\widehat{\mathcal{A}}}(w\sh y^r)
 =\sum_{n=0}^\infty
  \mathit{Z}_{\widehat{\mathcal{A}}}
  \left(\beta_{0,n} \left( w\left( -y\frac{1}{1-xv} \right)^r \right)\right)\boldsymbol{p}^n
\]
holds by Proposition \ref{shuffle}. 
Then we find
\begin{align*}
 \mathit{Z}_{\widehat{\mathcal{A}}}
  \left(\beta_{m,0}\left( \frac{1}{1-yu} \sh\Delta_u(w) \right)\right)
 &=\mathit{Z}_{\widehat{\mathcal{A}}}
  \left(\beta_{m,0}\left( \Delta_u(w) \sh \frac{1}{1-yu} \right)\right) \\
 &=\mathit{Z}_{\widehat{\mathcal{A}}}
  \left(\beta_{m,0}\left( \Delta_u(w) \sh \sum_{i=0}^\infty y^iu^i \right)\right) \\
 &=\sum_{n=0}^\infty
  \mathit{Z}_{\widehat{\mathcal{A}}}
  \left(\beta_{m,n}\left( \sum_{i=0}^\infty \Delta_u(w) \left( -yu\frac{1}{1-xv} \right)^i \right)\right)\boldsymbol{p}^n \\
 &=\sum_{n=0}^\infty
  \mathit{Z}_{\widehat{\mathcal{A}}}
  \left(\beta_{m,n} \left( \Delta_u(w) \frac{1}{1+yu(1-xv)^{-1}} \right)\right)\boldsymbol{p}^n.
\end{align*}
From the direct calculation, we have
\begin{align*}
 &\{1+yu(1-xv)^{-1}\}^{-1}\\
 &=\{(1-xv+yu)(1-xv)^{-1}\}^{-1}\\
 &=(1-xv)(1-xv+yu)^{-1}\\
 &=(1-xv)\{(1+yu)(1-(1+yu)^{-1}xv)\}^{-1}\\
 &=(1-xv)(1-(1+yu)^{-1}xv)^{-1}(1+yu)^{-1}\\
 &=(1-xv)\sum_{i=0}^\infty\left(\frac{1}{1+yu}xv\right)^i(1+yu)^{-1}.
\end{align*}
Since $\Delta_u(x)=(1+yu)^{-1}x$, we have
\begin{align*}
 &\{1+yu(1-xv)^{-1}\}^{-1}\\
 &=(1-xv)
  \sum_{i=0}^\infty\left(\Delta_u(x)v\right)^i(1+yu)^{-1}\\
 &=(1+yu)^{-1}
  +\sum_{i=1}^\infty\left(\Delta_u(x)-x\right)\left(\Delta_u(x)\right)^{i-1}v^i(1+yu)^{-1}.
\end{align*} 
Since $x=\Delta_u(x+y(1+xu)^{-1}xu)$, we have
\begin{align*}
 &\{1+yu(1-xv)^{-1}\}^{-1}\\ 
 &=(1+yu)^{-1}-\sum_{i=1}^\infty\Delta_u\left(y\frac{xu}{1+xu}\right)\Delta_u\left(x^{i-1}\right)v^i(1+yu)^{-1}\\
 &=R_{x}^{-1}\Delta_uR_{x}\left(1-yu\frac{1}{1+xu}\cdot\frac{xv}{1-xv}\right).
\end{align*}
Hence we get 
\begin{align} \label{rhs}
 \begin{split}
  &\mathit{Z}_{\widehat{\mathcal{A}}}
   \left(\beta_{m,0} \left( \frac{1}{1-yu} \sh\Delta_u(w) \right)\right)\\
  &=\sum_{n=0}^\infty \mathit{Z}_{\widehat{\mathcal{A}}}
   \left(\beta_{m,n}R_{x}^{-1} \Delta_u R_{x} \left(w-wyu\frac{1}{1+xu} \cdot
   \frac{xv}{1-xv} \right)\right)\boldsymbol{p}^n.
 \end{split}
\end{align}
By (\ref{lhs}), (\ref{rhs}), and Proposition \ref{IKZ}, we finally obtain the theorem.

\subsection{Proof of Corollary \ref{HofF}}
When $m = 1$ in Theorem \ref{main}, we have
\begin{align*}
 &\sum_{n=0}^\infty \mathit{Z}_{\widehat{\mathcal{A}}}
 \left( \beta_{1,n}R_{x}^{-1} \Delta_u R_{x} \left(w-wyu\frac{1}{1+xu} \cdot
 \frac{xv}{1-xv} \right)\right)\boldsymbol{p}^n \\
 &=\mathit{Z}_{\widehat{\mathcal{A}}}(w)\mathit{Z}_{\widehat{\mathcal{A}}}(y). 
\end{align*}
Since 
\begin{align*} 
 &\beta_{1,n}R_{x}^{-1} \Delta_u R_{x} \left(w-wyu\frac{1}{1+xu} \cdot
  \frac{xv}{1-xv} \right) \\
 &=
  \begin{cases}
   -wyx^n &\textrm{if } n\ge1, \\
   -R_{x}^{-1} \partial_1 R_{x} (w) &\textrm{if } n=0,
  \end{cases}
\end{align*}
we get the result.

\section*{Acknowledgement}
The authors would like to express their sincere gratitude to 
Doctor Minoru Hirose for valuable comments.


\end{document}